\documentclass[11pt]{amsart}
\usepackage[T1]{fontenc}
\usepackage{lmodern}
\usepackage[margin=1in]{geometry}
\usepackage{amsmath,amssymb,amsthm,mathtools}
\usepackage{microtype}
\usepackage[colorlinks=true,linkcolor=black,citecolor=black,urlcolor=blue]{hyperref}
\newtheorem{theorem}{Theorem}[section]
\newtheorem{corollary}{Corollary}[section]

\newtheorem{remark}{Remark}[section]
\newtheorem{lemma}{Lemma}[section]
\newtheorem{proposition}{Proposition}[section]
\newcommand{\R}{\mathbb R}
\newcommand{\Ric}{\operatorname{Ric}}

\newcommand{\dd}{\,d}
\numberwithin{equation}{section}
\title[a classification of complete self-shrinkers]{a classification of complete self-shrinkers*}
\author [Q. -M. Cheng, F. Li  and G. Wei]{Qing-Ming Cheng, Fengjiang Li{$^+$}  and Guoxin Wei{$^+$}}
\address{Qing-Ming Cheng \newline
\indent
Mathematical Science Research Center, \newline 
\indent Chongqing University of Technology, Chongqing 400054, P. R. China. 
\vskip 1mm
\indent
Osaka Center Advanced Mathematical Institute \newline
\indent
Osaka Metropolitan University, Osaka 558-8585, Japan \newline
\indent qingmingcheng@yahoo.com, chengqingming@cqut.edu.cn }
\address{Fengjiang Li \newline
\indent
Mathematical Science Research Center, \newline 
\indent Chongqing University of Technology, Chongqing 400054, P. R. China. }
\address{Guoxin Wei \newline
\indent   School of Mathematical Sciences, South China Normal University,
\newline
\indent 510631, Guangzhou,  China, weiguoxin@tsinghua.org.cn}

\thanks{2020 Mathematics Subject Classification. Primary 53C40; Secondary 53C42}
\thanks{$+$ Fengjiang Li  and Guoxin Wei are corresponding authors}
\thanks{*This work was supported by  National Natural Science Foundation of China (Grant Nos.
12301062, 12571050, 12671065), Natural Science Foundation of Chongqing
(No.CSTB2024NSCQ-MSX0537), Japan Society for the Promotion of Science Grant-in-Aid
for Scientific Research (C) (Grant No. 25K06992).}

\keywords{Self-shrinker, scalar curvature, Bakry-\'Emery Ricci tensor, Gaussian volume, rigidity}

\begin{document}
\begin{abstract}
Let $X:M^n\to\R^{n+1}$ be an $n$-dimensional  complete  self-shrinker. 
We obtain  a complete classification of complete self-shrinkers with positive constant scalar curvature. 
More precisely,  we prove that the round sphere $S^n(\sqrt n)$ and  the standard generalized cylinder
$S^k(\sqrt k)\times \R^{n-k}$ for $2\leq k\leq n-1$ are the only complete self-shrinkers with positive constant scalar curvature.
The key difficulty is to characterize the residual case in Cheng-Li-Wei \cite{CLW}: $R>0$, $S<1$, and $\sup_M S=1$,
where $R$ and $S$ denote the scalar curvature and the squared norm of the second fundamental form, respectively.
For this
case, it seems a hard task that the generalized maximum principle yields a useful information. 
In order to overcome this substantial  difficulty, our  key ingredient  is to get a uniform positive lower bound 
for the Bakry-\'Emery Ricci curvature so that  we can make use of the comparison theorem of Wei-Wylie \cite{WeiWylie} to conclude  that 
the Gaussian volume is finite. Furthermore, the gap theorems on $S$ are given.
\end{abstract}
\maketitle

\section{Introduction and main results}

The analysis of singularities is a central problem in mean curvature flow. Huisken's monotonicity formula and his study of rescaled flows \cite{Huisken90,Huisken93} place self-shrinking solutions of the mean curvature flow at the center of this analysis, in particular in the study of type I singularities. 
The work of Colding and Minicozzi \cite{CM} relates the rigidity and stability of self-shrinkers to generic singularity models. 
Throughout this paper, every  hypersurface is understood to be a smooth  immersed and  connected hypersurface without boundary. 
An $n$-dimensional hypersurface $X:M^n\to\R^{n+1}$ is called {\it a self-shrinker} if 
\begin{equation}\label{eq:shrinker}
H+\langle X,N\rangle=0,
\end{equation}
where $H$ and $N$ denote the mean curvature and the unit normal vector of the hypersurface $X:M^n\to\R^{n+1}$, respectively.

It is well-known that the distinction between immersed and embedded self-shrinkers is substantial. Abresch and Langer \cite{AL} classified closed 
self-shrinking curves; the circle is the only embedded member of that class. Halldorsson \cite{Halldorsson} studied all self-similar curve-shortening 
solutions, including complete non-proper shrinking curves. In dimension two, Drugan \cite{Drugan} constructed an immersed non-embedded self-shrinking 
sphere, on the other hand,  Brendle \cite{Brendle} proved that a 2-dimensional compact embedded self-shrinker of genus zero is the round sphere.
In this paper, we  concerns  immersed self-shrinkers  and do not initially impose any  conditions of  embedding or polynomial volume growth.

The condition of the polynomial volume growth is a powerful and strong geometric condition. The mean convex classification developed by Huisken \cite{Huisken90,Huisken93} and Colding-Minicozzi \cite{CM} identifies the hyperplane, the round spheres, 
and the generalized cylinders are the only complete embedded self-shrinkers
with polynomial volume growth. Ding-Xin \cite{DXVolume} established Euclidean upper volume growth estimates for complete proper self-shrinkers. 
Cheng-Zhou \cite{ChengZhou} proved the equivalence of properness, polynomial volume growth, and finite Gaussian  volume for complete self-shrinkers. Complementarily, Li-Wei \cite{LWVolume} obtained at least linear volume growth for complete non-compact proper self-shrinkers. 
Since there exist complete non-proper shrinking curves as in Halldorsson \cite{Halldorsson} studied,  the polynomial volume growth seems  a substantial condition
for Gaussian integration by parts on a non-compact self-shrinker.

Curvature pinching provides another route in order to  classify complete self-shrinkers. Cao-Li \cite{CaoLi} obtained the first  gap theorem on complete self-shrinkers with 
 polynomial volume growth under $S\le1$ where $S$ denotes the squared norm of the second fundamental form. Ding-Xin \cite{DXRigidity}, Xu-Xu \cite{xx17} 
 and Lei-Xu-Xu\cite{lxx20} obtained the second gap on the second fundamental form under the condition of  polynomial volume growth.
Ding-Xin \cite{DXRigidity} also  developed an  integral rigidity result on the second fundamental form under the condition of  polynomial volume growth.
Cheng-Peng \cite{ChengPeng} introduced a generalized maximum principle for $\mathcal L$ 
 on complete self-shrinkers with Ricci curvature bounded from below, permitting rigidity arguments without assumption on the polynomial volume growth. 
 Li-Wei \cite{LWRigidity} extended classification and rigidity results in higher codimension under additional geometric assumptions. 
 These works provide both the geometric models and the analytic methods.

The classification problem on complete self-shrinkers with constant norm of the second fundamental form is  
often viewed as a self-shrinker counterpart of the Chern problem for compact minimal hypersurfaces in the unit sphere:
\vskip1mm
\noindent
{\bf Chern problems}.
For $n$-dimensional compact minimal  hypersurfaces in $S^{n+1}(1)$
with constant  squared norm $S$ of the second fundamental  form, is the following true?
\begin{enumerate}
\item $S\leq c(n)$, where $c(n)$ is a constant depending only on dimension $n$,
\item the values of $S$ of the squared norm
of the second fundamental form  are discrete, 
\item the values of $S$
should determine the hypersurfaces up to a rigid motion in the ambient sphere
$S^{n+1}(1)$.
\end{enumerate}
\vskip1mm
\noindent
For complete self-shrinkers, its expected hypersurface models are $\R^n$ and $S^k(\sqrt{k})\times\R^{n-k}$, $1\le k\le n$. Cheng-Ogata \cite{ChengOgata} settled the two-dimensional case for this problem. In the dimension three, Cheng-Z. Li-Wei \cite{CLWf4, CLWf3} obtained classifications under additional
assumption  that  $f_3$ is constant or  $f_4$ is constant, respectively, where
$
f_3=\sum_i\lambda_i^3,\,  f_4=\sum_i\lambda_i^4.
$
Cheng-Wei \cite{ChengWei} established a gap theorem on complete self-shrinkers with constant norm of the second fundamental form and  with polynomial volume growth. Furthermore, Cheng-Wei-Yano \cite{CWY} obtained a second gap result under constancy of $S$ and $f_3$, without assuming polynomial volume growth. 
Recently, Cheng-Li-Wei \cite{CLW} have resolved this problem if the scalar curvature is bounded from below by $-\frac75$.

For a minimal hypersurface in the unit sphere, 
the Gauss equation makes that constancy of scalar curvature is equivalent to constancy of the squared norm of the second fundamental form. 
But for a  self-shrinker, the Gauss equation $R=H^2-S$ does not give such an equivalent relation  because $H$ may not be  a constant in general. 
Hence, the classification problem on complete self-shrinkers with constant scalar curvature,  as a self-shrinker counterpart of the Chern problem 
for compact minimal hypersurfaces in the unit sphere, will be are very important and interesting.
In \cite{Guo}, Guo proved that compact self-shrinkers with constant scalar curvature in $\mathbb R^{n+1}$
are isometric to the sphere $S^{n}(\sqrt n)$. Since an $n$-dimensional compact self-shrinker  
in $\mathbb R^{n+1}$ must have a convex point, the  scalar curvature must be positive at this point.
By making use of Stokes formula, Guo \cite{Guo} proved  the scalar curvature $R= (n-1)$.  Thus, 
Chern type problems on compact self-shrinkers with constant scalar curvature were resolved by Guo \cite{Guo}. 
On the other hand,  the study on $n$-dimensional complete non-compact self-shrinkers in $\mathbb R^{n+1}$ is  
more important and  in this case, one can not use  integral formulas and Stokes theorem. 
This will be a substantial problem, which one should  overcome.
Luo, Sun and Yin \cite{LSY}  have classified $n$-dimensional complete  self-shrinkers in $\mathbb R^{n+1}$ with
zero  scalar curvature.  
 Chern type problem on complete self-shrinkers with positive constant scalar curvature  is the following:

\vskip2mm
\noindent
{\bf Problem}. Are the round sphere $S^n(\sqrt n)$ and  the standard generalized cylinder
$S^k(\sqrt k)\times \R^{n-k}$,  for $2\leq k\leq n-1$, the only complete self-shrinkers with positive constant scalar curvature?
\vskip2mm

For the above problem, Cheng-Li-Wei \cite{CLW} have made an important breakthrough 
by making use of the  generalized maximum principle of Cheng-Peng \cite{ChengPeng}. 
But it is a hard task that the generalized maximum principle yields a useful information 
for the case that $S<1$ and $\sup_M S=1$.

In this paper, our main purpose is to resolve the above problem completely.
We address this issue through the Gaussian weighted geometry. The decisive ingredient is 
to get a uniform positive lower bound 
for the Bakry-\'Emery Ricci curvature  so that  we can make use of the comparison theorem of Wei-Wylie \cite{WeiWylie} 
to conclude  that the Gaussian volume is finite. First of all, we prove the following:

\begin{theorem}\label{thm:main}
For $n\ge2$,  let $X:M^n\to\R^{n+1}$ be an $n$-dimensional complete self-shrinker.
If
\begin{equation}\label{eq:assumptions}
S\le1\quad\text{and}\quad R\ge r_0>0
\end{equation}
for some positive constant $r_0$, then $S\equiv1$ and  
$X:M^n\to\R^{n+1}$  is isometric to $S^k(\sqrt{k})\times\R^{n-k}$ for some
$k\in\{2,\ldots,n\}$. In particular, $R\equiv k-1$.
\end{theorem}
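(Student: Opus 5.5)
The plan is to show that the Gaussian volume $\int_M e^{-|X|^2/2}\,dv$ is finite. Once this holds, the classical gap theorem of Cao-Li \cite{CaoLi} applies, and by Cheng-Zhou \cite{ChengZhou} finite Gaussian volume is equivalent to polynomial volume growth and properness. The gap theorem says that a complete self-shrinker with $S\le1$ and polynomial volume growth is a hyperplane, a round sphere $S^n(\sqrt n)$, or a cylinder $S^k(\sqrt k)\times\R^{n-k}$. The hyperplane has $R=0$, which contradicts $R\ge r_0>0$. The case $k=1$ gives $S^1\times\R^{n-1}$, whose scalar curvature is also $0$, so it is excluded as well. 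The remaining models have $S\equiv1$ and $R\equiv k-1$ with $2\le k\le n$, which is the claim.

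To get finite Gaussian volume, I would use the weighted geometry of $(M,g,e^{-f}dv)$ with $f=|X|^2/2$. Since $X$ is a self-shrinker, $\nabla^2 f=g-H\,h$, where $h$ is the second fundamental form (with the sign convention making $H=-\langle X,N\rangle$). The Gauss equation gives $\Ric=H h-h^2$. Therefore
\[
\Ric_f=\Ric+\nabla^2 f=g-h^2 ,
\]
and for a unit vector $v$ we get $\Ric_f(v,v)\ge 1-\lambda_{\max}^2$. The key estimate is a uniform bound $\lambda_i^2\le 1-\delta$ for some $\delta=\delta(n,r_0)>0$. Then $\Ric_f\ge\delta g$, and the Wei-Wylie comparison \cite{WeiWylie} for $\Ric_f\ge\delta>0$ gives finite weighted volume, $\int_M e^{-f}<\infty$. (With a positive lower bound on $\Ric_f$ and no bound on $f$ needed, Wei-Wylie/Morgan yield $\mathrm{Vol}_f<\infty$.)

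The main obstacle is bounding a single principal curvature $\lambda_1$ away from $\pm1$ using only $S\le1$ and $R\ge r_0$. I would write $R=H^2-S=2\sum_{i<j}\lambda_i\lambda_j$. Suppose $\lambda_1^2\ge 1-\epsilon$. Then $\sum_{i\ge2}\lambda_i^2\le\epsilon$. It follows that
\[
R=2\lambda_1\sum_{i\ge2}\lambda_i+2\sum_{2\le i<j}\lambda_i\lambda_j\le 2\sqrt{(n-1)\epsilon}+(n-2)\epsilon .
\]
For $\epsilon$ small depending on $n$ and $r_0$, this is less than $r_0$, a contradiction. Hence $\lambda_i^2\le1-\epsilon_0$ for every $i$, and $\Ric_f\ge\epsilon_0$ uniformly. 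This is exactly the residual case $\sup S=1$ that the generalized maximum principle could not handle, and it is handled pointwise here. I would then double-check the Wei-Wylie hypotheses, namely completeness and a smooth $f$. I would also check that the Cao-Li theorem is invoked in the form that needs only polynomial volume growth (equivalently finite Gaussian volume) and not embeddedness.
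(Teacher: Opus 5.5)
Your proposal is correct and follows the paper's proof essentially step for step. The steps are: $\Ric_\varphi=g-h^2$ for $\varphi=|X|^2/2$; a uniform pointwise bound $\lambda_i^2\le 1-\delta(n,r_0)$ from $R\ge r_0$ and $S\le 1$; finite Gaussian volume via Wei--Wylie; then Cheng--Zhou and Cao--Li, with $R>0$ ruling out the hyperplane and $S^1(1)\times\R^{n-1}$. The only cosmetic difference is that you obtain the principal-curvature bound by contradiction using $S\le1$ directly, while the paper's algebraic lemma turns the same inequality $r_0\le 2\sqrt{(n-1)S}\,t_j+(n-2)t_j^2$ into an explicit bound valid for arbitrary $S$, which it reuses for the second-gap theorem.
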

\begin{remark}
In \cite{CaoLi}, Cao-Li proved this theorem under the condition of polynomial volume growth for any co-dimension case.
\end{remark}
As an obvious  consequence  of the theorem \ref{thm:main}, we have the following:

\begin{corollary}\label{cor:residual}
There are no complete  self-shrinkers  $X:M^n\to\R^{n+1}$  with positive constant scalar curvature and $S<1$.
\end{corollary}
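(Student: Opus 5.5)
The plan is to derive the Corollary directly from Theorem~\ref{thm:main} by contradiction, using the strict inequality $S<1$ only at the last step.

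Suppose $X:M^n\to\R^{n+1}$ is a complete self-shrinker whose scalar curvature is a positive constant, say $R\equiv r_0>0$, and which satisfies $S<1$ everywhere on $M$. Then both hypotheses in \eqref{eq:assumptions} hold:
\begin{itemize}
\item $S<1$ gives $S\le 1$;
\item $R\equiv r_0$ gives $R\ge r_0>0$.
\end{itemize}
So Theorem~\ref{thm:main} applies, and its conclusion includes $S\equiv 1$ on $M$. This contradicts $S<1$ at any point of $M$. Hence no such self-shrinker exists.

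The case $n=1$ need not be treated separately. Scalar curvature vanishes identically on curves, so a positive constant $R$ forces $n\ge 2$, which is the range covered by Theorem~\ref{thm:main}.

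There is no real obstacle here. All the analytic work, namely the uniform positive lower bound for the Bakry--\'Emery Ricci curvature and the resulting finiteness of the Gaussian volume via Wei--Wylie, sits inside the proof of Theorem~\ref{thm:main}. The Corollary only observes that its hypotheses are a special case of \eqref{eq:assumptions}, and that the rigidity conclusion $S\equiv1$ is incompatible with the strict inequality $S<1$.
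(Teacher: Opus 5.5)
Your proposal is correct and follows the paper: the paper presents this corollary as an immediate consequence of Theorem~\ref{thm:main}. It takes $r_0$ to be the constant value of $R$ and observes that the conclusion $S\equiv1$ contradicts $S<1$. Your remark that $n=1$ is excluded automatically, since $R\equiv 0$ on curves, is a harmless detail the paper leaves implicit.
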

The following theorem gives a complete classification for  complete self-shrinkers with non-negative constant scalar curvature.

\begin{theorem}\label{thm:classification}
Let $X:M^n\to\R^{n+1}$ be an $n$-dimensional  complete self-shrinker with  nonnegative constant scalar curvature, Then 
$X:M^n\to\R^{n+1}$ is isometric to one of
\begin{enumerate}
\item $S^n(\sqrt n)$,
\item $\R^n$,
\item
$S^k(\sqrt k)\times\R^{n-k}\quad(1\le k\le n-1)$,
\item $\Gamma\times\R^{n-1}$, 
\end{enumerate}
where $\Gamma$ is a complete planar self-shrinking curve. 
\end{theorem}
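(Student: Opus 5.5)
The plan is to split according to the value of the constant scalar curvature $R$. In each case we reduce to a classification that is already available: the zero case by Luo--Sun--Yin, and the positive case by Cheng--Li--Wei \cite{CLW} together with Theorem \ref{thm:main}.

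\emph{The case $R\equiv0$.} Here we invoke the classification of Luo, Sun and Yin \cite{LSY} of complete self-shrinkers in $\R^{n+1}$ with zero scalar curvature. By the Gauss equation, $H^2=S$ everywhere. Their result gives exactly the hyperplane $\R^n$, the cylinder $S^1(1)\times\R^{n-1}$, and the products $\Gamma\times\R^{n-1}$ with $\Gamma$ a complete planar self-shrinking curve. These are items (2), (3) with $k=1$, and (4) of the statement, so nothing new is needed here beyond quoting \cite{LSY}.

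\emph{The case $R\equiv r_0>0$.} I would first recall what Cheng--Li--Wei \cite{CLW} obtained via the generalized maximum principle of Cheng--Peng \cite{ChengPeng}. Note that positivity of $R=H^2-S$ and the Gauss equation give a lower Ricci bound, so that maximum principle is applicable. As described in the introduction and the abstract, \cite{CLW} shows the following for a complete self-shrinker with positive constant scalar curvature: either it is $S^n(\sqrt n)$ or $S^k(\sqrt k)\times\R^{n-k}$ with $2\le k\le n-1$, or it falls into the residual case $S<1$, $\sup_M S=1$. (In the compact case one may alternatively quote Guo \cite{Guo}.) So it only remains to rule out the residual case. There the hypotheses \eqref{eq:assumptions} of Theorem \ref{thm:main} hold, with $S\le1$ and $R\equiv r_0>0$. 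Theorem \ref{thm:main} then forces $S\equiv1$, which contradicts $S<1$; this is precisely Corollary \ref{cor:residual}. The surviving examples are therefore $S^n(\sqrt n)$, which is item (1) and has $R=n-1$, and the cylinders with $2\le k\le n-1$, which have $R=k-1$ and fall under item (3).

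\emph{Main obstacle.} The genuine difficulty is entirely inside Theorem \ref{thm:main}, namely the residual case, which we are allowed to assume. The one point I would check carefully is that the dichotomy of \cite{CLW} really covers every other possibility, including those with $\sup_M S>1$ or $S\ge1$ somewhere. If it covers only part of that range, I would redo the argument there. First apply the generalized maximum principle to $S$, using the standard identity
\[
\tfrac12\mathcal L S=|\nabla A|^2+S(1-S).
\]
Then use $H^2=S+r_0$ to control $|\nabla A|$ by $|\nabla H|$, and push towards $S\equiv\mathrm{const}$, at which point the known classification for constant $S$ applies. Finally, I would confirm that $R\ge0$ constant excludes every other model: the value of $R$ on each listed model is $k-1\ge0$, consistent with the list.
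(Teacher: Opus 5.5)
Your proposal is essentially the paper's proof: the paper quotes the Cheng--Li--Wei theorem for all constant $R\ge 0$ (one of (1)--(4), or the residual case $S<1$, $\sup_M S=1$, $R>0$) and eliminates the residual case by Corollary~\ref{cor:residual}, as you do; the only difference is that you treat $R\equiv0$ separately through Luo--Sun--Yin rather than absorbing it into the citation of \cite{CLW}. Your fallback paragraph is unnecessary, and your aside that $R>0$ plus the Gauss equation gives a lower Ricci bound is false pointwise (for $\lambda=(-\epsilon,a,a)$ with $2a^2-4a\epsilon=r_0$ one gets $R_{11}=r_0/2-a^2$), but your main argument does not depend on it.
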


\begin{remark} In order to prove the theorem~\ref{thm:main}, we shall introduce the Bakry--\'Emery Ricci tensor $\Ric_{\varphi}$
associated with $\varphi=\frac{|X|^2}2$. The assumption in  the theorem ~\ref{thm:main} yields the  Bakry--\'Emery Ricci curvatrue
has a uniform positive lower bound which ensures that  the  Gaussian volume is finite. 
\end{remark}
\begin{remark}
According to our theorem \ref{thm:classification}, we have completely resolved  Chern type problem for complete 
self-shrinker with  nonnegative constant scalar curvature. In particular, we know that  the round sphere $S^n(\sqrt n)$ 
and  the standard generalized cylinder
$S^k(\sqrt k)\times \R^{n-k}$ for $2\leq k\leq n-1$ are the only complete self-shrinkers with positive constant scalar curvature.
Thus, the above problem is resolved completely.

\end{remark}

For the second gap on $S$, which denotes the squared norm of the second fundamental form, Ding-Xin \cite{DXRigidity}, 
Xu-Xu \cite{xx17} and Lei-Xu-Xu \cite{lxx20}
proved that for an $n$-dimensional complete self-shrinker with polynomial volume growth, if $1\leq S\leq \dfrac{19}{18}$, then 
$S\equiv 1$. Since the condition of  polynomial volume growth forces  a strong geometric restriction for self-shrinkers, 
we will consider  the second gap on $S$ for complete self-shrinkers without the condition 
of polynomial volume growth.

\begin{theorem}\label{thm:2gap}
For $n\ge2$,  let $X:M^n\to\R^{n+1}$ be an $n$-dimensional complete self-shrinker with scalar curvature $ R\ge r_0$,
where  $r_0>\dfrac{n-2}{18}+\dfrac{\sqrt{19(n-1)}}9$ is a constant.
If
\begin{equation}\label{eq:assumptions}
1\leq S\le1+\dfrac {1}{18},
\end{equation}
then $S\equiv1$ and  $X:M^n\to\R^{n+1}$  is isometric to $S^k(\sqrt{k})\times\R^{n-k}$ for some
$k\in\{2,\ldots,n\}$. In particular, $R\equiv k-1$. 
\end{theorem}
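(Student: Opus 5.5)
We follow the strategy of Theorem~\ref{thm:main}: first prove that the Gaussian volume $\int_M e^{-|X|^2/2}\,dv$ is finite, then invoke the known second gap theorem for self-shrinkers with polynomial volume growth.

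\textbf{Lower bound on $\Ric_\varphi$.} Let $\varphi=\frac{|X|^2}{2}$, so that $\Ric_\varphi=\Ric+\nabla^2\varphi$. For a self-shrinker, $\nabla^2\varphi(e_i,e_j)=\delta_{ij}+\langle X,N\rangle h_{ij}=\delta_{ij}-Hh_{ij}$. By the Gauss equation, $\Ric(e_i,e_i)=H\lambda_i-\lambda_i^2$, hence
\[
\Ric_\varphi(e_i,e_i)=1-\lambda_i^2 .
\]
The task is therefore to bound $\max_i\lambda_i^2$ from above by a constant $1-\delta$ with $\delta>0$.

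Fix $i$ and write $\sum_{j\ne i}\lambda_j=H-\lambda_i$. Cauchy--Schwarz gives $\sum_{j\ne i}\lambda_j^2\ge (H-\lambda_i)^2/(n-1)$, so
\[
R=H^2-S=2\sum_{j<k}\lambda_j\lambda_k=2\lambda_i(H-\lambda_i)+(H-\lambda_i)^2-\sum_{j\ne i}\lambda_j^2 .
\]
Set $t=H-\lambda_i$ and $u=\sum_{j\ne i}\lambda_j^2=S-\lambda_i^2$. Then
\[
R=2\lambda_i t+t^2-u\le 2|\lambda_i|\,|t|+t^2-u ,
\qquad t^2\le (n-1)u .
\]
Bounding $|\lambda_i||t|\le|\lambda_i|\sqrt{(n-1)u}$, we get
\[
R\le 2|\lambda_i|\sqrt{(n-1)(S-\lambda_i^2)}+(n-2)(S-\lambda_i^2).
\]
Maximizing the right-hand side over $\lambda_i^2$ for fixed $S\le 1+\tfrac1{18}$ should produce an inequality of the form $R\le g(\lambda_i^2)$. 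The threshold $r_0>\frac{n-2}{18}+\frac{\sqrt{19(n-1)}}{9}$ should be exactly what forces $\lambda_i^2\le1-\delta(r_0)$. Indeed, at $\lambda_i^2=1$ and $S=\frac{19}{18}$ the right-hand side equals $2\sqrt{(n-1)/18}+\frac{n-2}{18}$, and the stated constant is of this type. We must check that the right-hand side is monotone in the relevant range, so that $\lambda_i^2\ge1$ would force $R\le\frac{n-2}{18}+\frac{\sqrt{19(n-1)}}9<r_0$. By continuity and compactness in $(\lambda_i,S)$ we then obtain $\Ric_\varphi\ge\delta>0$ uniformly. This computation, in particular getting the monotonicity and the precise constant right, is the step I expect to be the main obstacle.

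\textbf{Finite weighted volume.} With $\Ric_\varphi\ge\delta$ on a complete manifold, the Wei--Wylie volume comparison (for $\Ric_f$ bounded below by a positive constant) gives $\int_M e^{-\varphi}\,dv<\infty$, exactly as in the proof of Theorem~\ref{thm:main}. By Cheng--Zhou, finite Gaussian volume is equivalent to properness and polynomial volume growth.

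\textbf{Conclusion.} Now apply the second gap theorem of Ding--Xin, Xu--Xu and Lei--Xu--Xu: a complete self-shrinker with polynomial volume growth and $1\le S\le\frac{19}{18}$ has $S\equiv1$. Cao--Li's classification for $S\equiv1$ then yields $S^k(\sqrt k)\times\R^{n-k}$. Since $R\ge r_0>0$, the cases $k=0$ and $k=1$ are excluded, so $k\ge2$ and $R\equiv k-1$.
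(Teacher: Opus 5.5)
Your proposal is correct and follows the same route as the paper's proof. You get a uniform bound $\max_i\lambda_i^2\le 1-\delta$, hence $\Ric_\varphi\ge\delta g$ via $(\Ric_\varphi)_{ii}=1-\lambda_i^2$, and then apply Wei--Wylie, Cheng--Zhou, Lei--Xu--Xu and Cao--Li. The only difference is the pointwise algebra. The step you flagged as the main obstacle is in fact easy, and your version gives a better constant.

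With $x=\lambda_i^2\in[0,S]$, your inequality already reads $R\le g_S(x):=2\sqrt{(n-1)x(S-x)}+(n-2)(S-x)$, so no further maximization is needed. Differentiating,
\[
g_S'(x)=\sqrt{n-1}\,\frac{S-2x}{\sqrt{x(S-x)}}-(n-2)<0\qquad\text{for } \tfrac{S}{2}<x<S .
\]
Since $S\le\frac{19}{18}<2$, $g_S$ is decreasing on $[1-\delta,S]$ for every $\delta\le\frac{17}{36}$. Also $g_S(1-\delta)$ is increasing in $S$. Hence $\lambda_i^2\ge 1-\delta$ forces $R\le g_{19/18}(1-\delta)$. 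As $\delta\to0$ this tends to
\[
\frac{n-2}{18}+2\sqrt{\frac{n-1}{18}}=\frac{n-2}{18}+\frac{\sqrt{18(n-1)}}{9},
\]
which is strictly smaller than the stated threshold $\frac{n-2}{18}+\frac{\sqrt{19(n-1)}}{9}$. So a uniform $\delta>0$ exists with room to spare; your constant is not merely ``of this type'', it is smaller.

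The paper's Lemma~\ref{lem:algebra} instead decouples the two factors by the crude bound $|\lambda_j|\le\sqrt S$. This gives $r_0\le 2\sqrt{(n-1)S}\,t_j+(n-2)t_j^2$ with $t_j^2=S-\lambda_j^2$. The right side is automatically increasing in $t_j$, so no monotonicity check is needed, and one gets a closed-form $\delta$ valid for every $S$ (Proposition~\ref{prop:ric}). Requiring $t_j^2>S-1$ at $S=\frac{19}{18}$ yields exactly the stated constant. The $\sqrt{19}$ instead of $\sqrt{18}$ comes from replacing $|\lambda_j|=1$ by $\sqrt{19/18}$.

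So the paper's route is simpler to verify. Yours costs one derivative but is sharp for the pointwise algebra: equality holds when $\lambda_i=1$ and the other $n-1$ principal curvatures are equal. It therefore shows the theorem already holds under the weaker hypothesis $r_0>\frac{n-2}{18}+\frac{\sqrt{2(n-1)}}{3}$.
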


\section{Geometric conventions and identities}

We will follow the geometric notation in \cite[Section 2]{CLW}. The metric $g$ on a hypersurface  $X:M^n\to\R^{n+1}$
is the induced metric. 
Choose a local orthonormal frame $\{e_1, \cdots, e_n, e_{n+1}\}$ with $e_{n+1}=N$ and dual coframe $\{\omega_1, \cdots, \omega_n, \omega_{n+1}\}$. 
The second fundamental form and its coefficients are given by $\vec h$ and $h_{ij}$
\[
\omega_{i,n+1}=\sum_j h_{ij}\omega_j,\qquad h_{ij}=h_{ji},\qquad
\vec h=\sum_{i,j}h_{ij}\omega_i\otimes\omega_j\,e_{n+1}.
\]
We put
\[
H=\sum_i h_{ii},\qquad S=\sum_{i,j}h_{ij}^2=\sum_i\lambda_i^2,
\]
where $\lambda_i$'s  are the principal curvatures. $H$ is called the mean curvature of the hypersurface $X:M^n\to\R^{n+1}$.
The covariant derivatives $h_{ijk}$ of the second fundamental form are defined by
\[
\sum_k h_{ijk}\omega_k=dh_{ij}+\sum_kh_{ik}\omega_{kj}+\sum_kh_{kj}\omega_{ki}.
\]
We have the Codazzi equation: for any $i, \ j, \ k$, 
\[
\qquad h_{ijk}=h_{ikj}.
\]
A hypersurface $X:M^n\to\R^{n+1}$ is called {\it a self-shrinker} if $H+\langle X,N\rangle=0$.
The equation $H+\langle X,N\rangle=0$ does not depend on the choice of the unit normal vector  $N$.  Gauss equations are given by 
\begin{equation}\label{eq:gauss}
R_{ijkl}=h_{ik}h_{jl}-h_{il}h_{jk},\qquad
R_{ij}=Hh_{ij}-\sum_kh_{ik}h_{kj},\qquad R=H^2-S.
\end{equation}
In particular, $H^2\le nS$ implies
\begin{equation}\label{eq:Rbound}
R\le(n-1)S.
\end{equation}
Set
\begin{equation}\label{eq:weight}
\varphi=\frac{|X|^2}{2},\qquad
d\mu_{\varphi}=e^{-\varphi}d\mu,\qquad
\Delta_{\varphi} u=\Delta u-\langle\nabla \varphi,\nabla u\rangle.
\end{equation}
Since $\nabla\varphi=X^T$, we have $\mathcal L u=\Delta u-\langle X,\nabla u\rangle=\Delta_{\varphi}u$, 
exactly the operator in \cite[Section 2]{CLW}. The auxiliary potential $\varphi$ is introduced for 
the weighted argument. A direct differentiation gives
\begin{equation}\label{eq:se}
\begin{aligned}
&\varphi_i=\nabla_i\varphi=\langle X, e_i\rangle,\\
&\varphi_{ij}=\nabla_j\nabla_i\varphi=\delta_{ij}+\langle X,N\rangle h_{ij}=\delta_{ij}-Hh_{ij}.
\end{aligned}
\end{equation}
The Bakry-\'Emery Ricci tensor $\Ric_{\varphi}$ in this paper is defined by 
\begin{equation}\label{eq:ricf}
(\Ric_{\varphi})_{ij}:=R_{ij}+\varphi_{ij}.
\end{equation}
According to the Gauss equation and (\ref{eq:se}), we have
\begin{equation}\label{eq:ricf}
(\Ric_{\varphi})_{ij}=\delta_{ij}-\sum_k h_{ik}h_{kj}.
\end{equation}
The following integral is called Gaussian volume:
$$
\int_Me^{-\varphi}d\mu=\int_Md\mu_{\varphi}=\int_Me^{-\frac{|X|^2}2}d\mu.
$$
Since self-shrinker $X:M^n\to\R^{n+1}$ is complete, the Gaussian volume may be positive infinite.
In view of 
\[
H_i=\sum_kh_{ik}\langle X,e_k\rangle,
\quad
H_{ij}=\sum_kh_{ijk}\langle X,e_k\rangle+h_{ij}-H\sum_kh_{ik}h_{kj},
\]
and 
\[
\Delta h_{ij}=H_{ij}+H\sum_kh_{ik}h_{kj}-Sh_{ij},
\]
we have  Simons identity for self-shrinkers:

\begin{equation}\label{eq:simons}
\begin{aligned}
&\mathcal L h_{ij}=(1-S)h_{ij},\\
&\frac12\mathcal L S=\sum_{i,j,k}h_{ijk}^2+S(1-S),
\end{aligned}
\end{equation}
where the Codazzi equation is used. 

\section{A lower bound on the Bakry--\'Emery  Ricci curvature}

In this section, we will  give an estimate on the Bakry--\'Emery  Ricci curvature. 
In order to do it, we prove an algebraic  lemma.
\begin{lemma}\label{lem:algebra}
For real numbers  $\lambda_1,\ldots,\lambda_n\in\R$ with $n\ge2$  such  that
\[
\sum_i\lambda_i^2=S,\qquad
\bigl (\sum_i\lambda_i\bigl)^2-\sum_i\lambda_i^2\ge r_0>0,
\]
then, for every $j$,  $1\leq j\leq n$, we have
\begin{equation}\label{eq:delta}
\lambda_j^2\le S-\dfrac{r_0^2}{\biggl(\sqrt{(n-2)r_0+(n-1)S}+\sqrt{(n-1)S}\biggl)^2}.
\end{equation}
\end{lemma}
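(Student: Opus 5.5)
The plan is to isolate one principal curvature and rewrite the scalar curvature condition in terms of the remaining ones. Fix $j$ and put $a=\lambda_j$, $T=\sum_{i\ne j}\lambda_i$ and $Q=\sum_{i\ne j}\lambda_i^2=S-a^2\ge 0$. The target inequality \eqref{eq:delta} is then exactly the lower bound
\[
Q\ \ge\ \frac{r_0^2}{\bigl(A+B\bigr)^2},\qquad A:=\sqrt{(n-2)r_0+(n-1)S},\quad B:=\sqrt{(n-1)S}.
\]
So the proof reduces to bounding $Q$ from below using the hypothesis.

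Expanding the hypothesis gives
\[
(a+T)^2-(a^2+Q)=2aT+T^2-Q\ \ge\ r_0 .
\]
By Cauchy--Schwarz over the $n-1$ indices $i\ne j$ we have $T^2\le (n-1)Q$, hence $|T|\le\sqrt{(n-1)Q}$. We also have $|a|\le\sqrt S$. Inserting both bounds yields
\[
r_0\ \le\ 2\sqrt S\,\sqrt{(n-1)Q}+(n-1)Q-Q \;=\; 2B\sqrt Q+(n-2)Q .
\]
This step uses the crude bound $|a|\le \sqrt S$ rather than $|a|=\sqrt{S-Q}$. That is exactly what produces the stated constant, so no sharper estimate is needed.

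Now set $q=\sqrt Q\ge0$. The previous inequality says that the quadratic $p(q)=(n-2)q^2+2Bq-r_0$ satisfies $p(q)\ge0$.

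\emph{Case $n=2$.} Here $p$ is linear, so $q\ge r_0/(2B)$. Since $A=B$ when $n=2$, this is precisely $q\ge r_0/(A+B)$.

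\emph{Case $n\ge3$.} The function $p$ is increasing on $q\ge0$ and satisfies $p(0)=-r_0<0$. Therefore $q$ is at least the positive root of $p$:
\[
q\ \ge\ \frac{-B+\sqrt{B^2+(n-2)r_0}}{n-2}=\frac{A-B}{n-2}=\frac{r_0}{A+B},
\]
where the last equality follows by rationalizing, using $A^2-B^2=(n-2)r_0$.

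In both cases, squaring gives $S-\lambda_j^2=Q\ge r_0^2/(A+B)^2$, which is \eqref{eq:delta}. The argument has no real obstacle. The only points needing care are keeping the sign of $2aT$ controlled through absolute values, and noting that the rationalized root formula covers the degenerate case $n=2$ uniformly.
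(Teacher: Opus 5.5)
Your proposal is correct and follows the paper's proof essentially step for step. You use the same splitting $t_j^2=\sum_{i\ne j}\lambda_i^2$, $b_j=\sum_{i\ne j}\lambda_i$, and the same crude bounds $|\lambda_j|\le\sqrt S$ and $|b_j|\le\sqrt{n-1}\,t_j$, which give $r_0\le 2\sqrt{(n-1)S}\,t_j+(n-2)t_j^2$, followed by the same case split $n=2$ versus $n>2$. The only cosmetic difference is that you take the positive root of the quadratic and rationalize it, whereas the paper completes the square; both give $t_j\ge r_0/(A+B)$.
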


\begin{proof} Since $\bigl (\sum_i\lambda_i\bigl)^2-\sum_i\lambda_i^2\ge r_0>0$, we know $S>0$.
For a fixed $j$, defining  $t_j=\sqrt{\sum_{i\ne j}\lambda_i^2}$ and $b_j=\sum_{i\ne j}\lambda_i$, 
we have 
$$
0\le t_j\le \sqrt S, \quad  |\lambda_j|\le \sqrt S \quad  \text{\rm and} \quad   |b_j|\le\sqrt{n-1}\ t_j.
$$ Hence, we obtain 
\begin{align*}
r_0&\leq \bigl (\sum_i\lambda_i\bigl)^2-\sum_i\lambda_i^2\\
&= 2\lambda_j b_j+b_j^2-t_j^2\\
&\le2\sqrt{(n-1)S}\ t_j+(n-2)t_j^2.\\
\end{align*}
If $n=2$, we have 
$$
t_j\geq \dfrac{r_0}{2\sqrt{(n-1)S}}
$$
If $n>2$, we have 
$$
r_0+\dfrac{n-1}{n-2}S\leq \biggl (\sqrt{\dfrac{n-1}{n-2}S}+\sqrt {n-2}\ t_j\biggl )^2.
$$
Thus, we obtain
$$
t_j\geq \dfrac{r_0}{\sqrt{(n-2)r_0+(n-1)S}+\sqrt{(n-1)S}}.
$$
Hence, for $n\geq 2$, we conclude 
$$
\lambda_j^2=S-t_j^2\le S-\dfrac{r_0^2}{\biggl(\sqrt{(n-2)r_0+(n-1)S}+\sqrt{(n-1)S}\biggl)^2}.
$$
\end{proof}
\begin{remark}
The estimate concerns the largest value of $ \lambda_j^2$, not merely their sum $S$. For a complete self-shrinker with $S\leq a$ for some positive number $a$, 
even if $S$ approaches $a$, no individual $\lambda_j^2$ can approach $a$ as long as the scalar curvature  $R$ remains 
uniformly positive.  This distinction supplies the strict uniform bound for  the Bakry--\'Emery  Ricci curvature $\Ric_{\varphi}$ if $a=1$.
\end{remark}

\begin{proposition}\label{prop:ric}
For an $n$-dimensional self-shrinker $X:M^n\to\R^{n+1}$, if the scalar curvature $R$ satisfies  $R\geq r_0>0$, 
then the Bakry--\'Emery  Ricci tensor  $\Ric_{\varphi}$ satisfies 
$$
\Ric_{\varphi}\ge\bigl(1-S+\dfrac{r_0^2}{\biggl(\sqrt{(n-2)r_0+(n-1)S}+\sqrt{(n-1)S}\biggl)^2} \bigl)g
$$
 with $\varphi=\frac{|X|^2}2$.
\end{proposition}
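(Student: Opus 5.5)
We apply Lemma~\ref{lem:algebra} pointwise, at each point of $M$, in a principal frame.

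Fix $p\in M$ and choose the local orthonormal frame $\{e_1,\dots,e_n\}$ at $p$ so that it diagonalizes the second fundamental form: $h_{ij}=\lambda_i\delta_{ij}$. Then $\sum_k h_{ik}h_{kj}=\lambda_i^2\delta_{ij}$. By the expression \eqref{eq:ricf} for the Bakry--\'Emery Ricci tensor, which comes from the Gauss equation \eqref{eq:gauss} together with the Hessian formula \eqref{eq:se}, we get
\[
\Ric_{\varphi}=\operatorname{diag}\bigl(1-\lambda_1^2,\ldots,1-\lambda_n^2\bigr)
\]
at $p$. Hence $\Ric_\varphi\ge(1-\max_j\lambda_j^2)\,g$ at $p$, because the smallest eigenvalue of a diagonal symmetric form is its smallest diagonal entry.

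Next we check the hypotheses of Lemma~\ref{lem:algebra} at $p$. We have $\sum_i\lambda_i^2=S(p)$. The Gauss equation $R=H^2-S$ gives $(\sum_i\lambda_i)^2-\sum_i\lambda_i^2=R(p)\ge r_0>0$. The lemma then yields, for every $j$,
\[
\lambda_j^2\le S-\dfrac{r_0^2}{\Bigl(\sqrt{(n-2)r_0+(n-1)S}+\sqrt{(n-1)S}\Bigr)^2},
\]
with $S=S(p)$. Substituting into $1-\lambda_j^2$ gives the claimed lower bound for each eigenvalue of $\Ric_\varphi$ at $p$. Since $p$ is arbitrary, the inequality holds on all of $M$.

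There is no real obstacle. The only points to watch are that the bound is pointwise, with $S$ evaluated at the same point, and that the diagonalizing frame is used only at $p$; the tensor inequality is frame-independent, so this causes no difficulty.
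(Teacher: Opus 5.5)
Your proposal is correct and follows essentially the same route as the paper: diagonalize the second fundamental form at a point, note that $\Ric_{\varphi}=\operatorname{diag}(1-\lambda_1^2,\ldots,1-\lambda_n^2)$ there, and apply Lemma~\ref{lem:algebra}, using $(\sum_i\lambda_i)^2-\sum_i\lambda_i^2=R\ge r_0$, to bound each $\lambda_j^2$. You also state explicitly that the bound is pointwise, with $S$ evaluated at the same point; the paper leaves this implicit.
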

\begin{proof}
We choose the orthonormal  frame $\{e_1, \cdots, e_n\}$ such that the second fundamental form is given by  $h_{ij}=\lambda_i\delta_{ij}$ at each point,
where  $\lambda_i$, for $1\leq i\leq n$, denotes  the principal curvature of the self-shrinker $X:M^n\to\R^{n+1}$. We have 
$H=\sum_i\lambda_i$, $S=\sum_i\lambda_i^2$ and $H^2-S=R\geq r_0>0$.
At each point, 
$$
\bigl(\sum_i\lambda_i\bigl)^2-\sum_i\lambda_i^2=H^2-S=R\geq r_0>0.
$$
According to the lemma \ref{lem:algebra}, we get that the principal curvature $\lambda_j$  for any $j$ satisfies
 $$
\lambda_j^2\le S-\dfrac{r_0^2}{\biggl(\sqrt{(n-2)r_0+(n-1)S}+\sqrt{(n-1)S}\biggl)^2}.
$$
In view of  $\varphi=\frac{|X|^2}2$, we have
$$
\nabla_i\varphi=\langle X, e_i\rangle, \quad \nabla_j\nabla_i\varphi=\delta_{ij}+\langle X, N\rangle h_{ij}
$$
The Bakry--\'Emery  Ricci tensor is given by, in view of the Gauss equation and the above formula, 
$$
(\Ric_{\varphi})_{ij}=R_{ij}+\nabla_j\nabla_i\varphi=\delta_{ij}-\lambda_i^2\delta_{ij}.
$$
Hence, we obtain 
$$
(\Ric_{\varphi})_{ij}\geq \bigl(1-S+\dfrac{r_0^2}{\biggl(\sqrt{(n-2)r_0+(n-1)S}+\sqrt{(n-1)S}\bigl)^2}\biggl)\delta_{ij}.
$$
This finishes  the proof of the proposition \ref{prop:ric}.
\end{proof}

\section{Proofs of main theorems}

In this section, we will give  proofs of our theorems.

\begin{proposition}\label{prop:growth}
Let $X:M^n\to\R^{n+1}$ be an $n$-dimensional  complete self-shrinker as in the theorem~\ref{thm:main}. we have 
\[
\int_M e^{-|X|^2/2}\dd\mu<\infty,
\]
that  is, the Gaussian volume is finite.
\end{proposition}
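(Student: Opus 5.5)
Under the hypotheses of Theorem~\ref{thm:main} ($S\le 1$, $R\ge r_0>0$), I will first get a uniform positive lower bound for $\Ric_\varphi$ from Proposition~\ref{prop:ric}, and then apply the Wei--Wylie volume comparison theorem for Bakry--\'Emery Ricci curvature to bound the weighted volume $\int_M e^{-\varphi}d\mu$. Write
\[
F(S)=\frac{r_0^2}{\Bigl(\sqrt{(n-2)r_0+(n-1)S}+\sqrt{(n-1)S}\Bigr)^2}.
\]
This is decreasing in $S$, so for $0\le S\le 1$ we get $F(S)\ge F(1)=:\delta>0$. Since $1-S\ge 0$, Proposition~\ref{prop:ric} gives
\[
\Ric_\varphi\ge \delta\, g \quad\text{on } M,\qquad \delta=\frac{r_0^2}{\bigl(\sqrt{(n-2)r_0+n-1}+\sqrt{n-1}\bigr)^2}>0 .
\]
Note that $R\ge r_0>0$ forces $S>0$, but this is not needed for the bound.

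The second step is the comparison theorem. Wei--Wylie \cite{WeiWylie} prove that on a complete manifold with $\Ric_\varphi\ge \delta g$, $\delta>0$, the weighted volume $\mathrm{Vol}_\varphi(B_p(r))=\int_{B_p(r)}e^{-\varphi}d\mu$ is bounded by a model quantity, and that in particular $\mathrm{Vol}_\varphi(M)<\infty$. They also give explicit growth control: the weighted area of geodesic spheres decays like $e^{-\delta r^2/2+ar}$, where $a$ depends only on a bound for $\partial_r\varphi$ on a fixed small ball around the base point $p$. For our $\varphi=|X|^2/2$ we have $|\nabla\varphi|=|X^T|\le |X|$, which is bounded on any compact ball, so this constant is finite. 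No lower bound on $\varphi$ is required beyond $\varphi\ge 0$, which holds trivially. I would state the cited result and check its hypotheses: $M$ complete with the induced metric, $\varphi$ smooth, and $\Ric_\varphi\ge\delta g$ with $\delta>0$.

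The main obstacle is invoking the Wei--Wylie result in exactly the form that gives finiteness of the total weighted volume, not only relative comparison. If that form is unavailable, I would prove it directly. Along a unit-speed minimizing geodesic $\gamma$ from $p$, the Bochner/Riccati argument for the weighted mean curvature $m_\varphi=m-\partial_r\varphi$ of geodesic spheres gives $m_\varphi'\le -\delta-\frac{(m_\varphi+\partial_r\varphi)^2}{n-1}\le-\delta$. Integrating from $r=1$ yields $m_\varphi(r)\le C-\delta(r-1)$, where $C$ is bounded via the Euclidean-like behavior near $p$. The weighted Jacobian $\mathcal A_\varphi$ in polar coordinates satisfies $(\log\mathcal A_\varphi)'=m_\varphi$, so $\mathcal A_\varphi(r,\theta)\le \mathcal A_\varphi(1,\theta)\,e^{C(r-1)-\delta(r-1)^2/2}$ up to the cut locus, and it vanishes after the cut point. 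Integrating over $\theta\in S^{n-1}$ and $r\in(0,\infty)$ gives a Gaussian-integrable bound. Hence $\int_M e^{-|X|^2/2}d\mu<\infty$.
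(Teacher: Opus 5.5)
Your proposal is correct and follows the paper's proof: you use $S\le 1$ together with the monotonicity in $S$ of the $r_0$-term in Proposition~\ref{prop:ric} to get $\Ric_\varphi\ge\delta g$ with the same constant $\delta=r_0^2/\bigl(\sqrt{(n-2)r_0+n-1}+\sqrt{n-1}\bigr)^2$, and then invoke Wei--Wylie's result \cite{WeiWylie} that $\Ric_\varphi\ge\delta g$ with $\delta>0$ on a complete manifold implies finite weighted volume. Your fallback Riccati argument ($m_\varphi'\le-\delta$, hence Gaussian decay of the weighted Jacobian) is the standard proof of that cited result; it goes through provided the initial bound on $m_\varphi$ is uniform in the direction, which you can get from Laplacian comparison on the compact ball $\bar B_p(1)$ and the boundedness of $|X|$ there.
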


\begin{proof}
In view of the proposition~\ref{prop:ric}, we know that the Bakry-\'Emery Ricci curvature satisfies
$$
\begin{aligned}
(\Ric_{\varphi})_{ij}&\geq \bigl(1-S+\dfrac{r_0^2}{\biggl(\sqrt{(n-2)r_0+(n-1)S}+\sqrt{(n-1)S}\biggl)^2}\bigl)\delta_{ij}\\
&\geq \bigl(\dfrac{r_0^2}{\biggl(\sqrt{(n-2)r_0+(n-1)}+\sqrt{(n-1)}\biggl)^2}\bigl)\delta_{ij}
\end{aligned}
$$
because of $S\leq 1$.
Hence, the Bakry-\'Emery Ricci tensor  satisfies $\Ric_{\varphi}\ge\delta g$ for a  constant $\delta$,
where 
$$\delta=\bigl(\dfrac{r_0^2}{\biggl(\sqrt{(n-2)r_0+(n-1)}+\sqrt{(n-1)}\biggl)^2}\bigl)>0.
$$
The comparison  theorem of Wei-Wylie \cite[Corollary~5.1]{WeiWylie} therefore gives
\[
\mu_{\varphi}(M)=\int_M e^{-\varphi}\dd\mu
=\int_M e^{-|X|^2/2}\dd\mu<\infty.
\]
Thus, the Gaussian volume is finite.
\end{proof}

\vskip2mm
\noindent
\begin{proof}[Proof of Theorem~\ref{thm:main}] 
By a scaling, according to the theorem in Cheng-Zhou \cite{ChengZhou},
we know that for a complete  self-shrinker, finite Gaussian volume, properness, Euclidean volume growth
and polynomial volume growth are equivalent. 
Hence,  from the proposition  \ref{prop:growth}, we conclude that the self-shrinker $X:M^n\to\R^{n+1}$ 
has  polynomial volume growth. 
According to the  theorem of Cao-Li \cite[Theorem~1.1]{CaoLi}, we obtain 
 $S\equiv 1$ and $X:M^n\to\R^{n+1}$  is isometric to $S^k(\sqrt{k})\times\R^{n-k}$ for some
$k\in\{2,\ldots,n\}$ since the scalar curvature is  positive.
\end{proof}

\begin{proof}[Proof of Theorem~\ref{thm:classification}] In view of  the theorem in Cheng-Li-Wei \cite{CLW}, for a complete 
self-shrinker $X:M^n\to\R^{n+1}$ with non-negative constant scalar curvature, we have 
$X:M^n\to\R^{n+1}$
is isometric to one of
\begin{enumerate}
\item $S^n(\sqrt n)$,
\item $\R^n$,
\item
$S^k(\sqrt k)\times\R^{n-k}\quad(1\le k\le n-1)$,
\item $\Gamma\times\R^{n-1}$, 
\end{enumerate}
or $S<1$, $\sup S=1$  and $R>0$, where $\Gamma$ is a complete planar self-shrinking curve.
Since the scalar curvature $R$ is a positive constant,
the corollary~\ref{cor:residual} gives that there are  no this kind of complete self-shrinkers. 
Hence,  this completes the proof of  the theorem \ref{thm:classification}.
\end{proof}

\begin{proof}[Proof of Theorem~\ref{thm:2gap}]
According  the proposition~\ref{prop:ric},  the Bakry-\'Emery Ricci curvature satisfies
$$
\begin{aligned}
(\Ric_{\varphi})_{ij}&\geq \bigl(1-S+\dfrac{r_0^2}{\biggl(\sqrt{(n-2)r_0+(n-1)S}+\sqrt{(n-1)S}\bigl)^2}\biggl)\delta_{ij}.\\
\end{aligned}
$$
Because of  $S\leq 1+\frac1{18}$, we infer
$$
\begin{aligned}
(\Ric_{\varphi})_{ij}
&\geq \bigl( -\dfrac1{18}+ \dfrac{r_0^2}{\biggl(\sqrt{(n-2)r_0+\dfrac{19}{18}(n-1)}+\sqrt{\dfrac{19}{18}(n-1)}\biggl)^2}\bigl)\delta_{ij}.
\end{aligned}
$$
In view of $r_0>\dfrac{n-2}{18}+\dfrac{\sqrt{19(n-1)}}9$, by a direct computation, we obtain
$$
\delta= -\dfrac1{18}+ \dfrac{r_0^2}{\biggl(\sqrt{(n-2)r_0+\dfrac{19}{18}(n-1)}+\sqrt{\dfrac{19}{18}(n-1)}\biggl)^2}>0.
$$
Hence, the Bakry-\'Emery Ricci tensor  satisfies $\Ric_{\varphi}\ge\delta g$ for the  positive constant $\delta$,
The comparison theorem of  Wei--Wylie \cite[Corollary~5.1]{WeiWylie} gives that the Gaussian volume satisfies
\[
\mu_{\varphi}(M)=\int_M e^{-\varphi}\dd\mu
=\int_M e^{-|X|^2/2}\dd\mu<\infty.
\]
Thus, the Gaussian volume is finite. According to the theorem in Cheng-Zhou \cite{ChengZhou},
for a complete  self-shrinker, the finite Gaussian volume and the polynomial volume growth are equivalent. 
Therefore,  the self-shrinker  $X:M^n\to\R^{n+1}$  has  polynomial volume growth. The theorem of 
Lei-Xu-Xu \cite{lxx20} yields $S\equiv 1$. Thus from  the  theorem of Cao-Li \cite[Theorem~1.1]{CaoLi}, we obtain 
$X:M^n\to\R^{n+1}$  is isometric to $S^k(\sqrt{k})\times\R^{n-k}$ for some
$k\in\{2,\ldots,n\}$ since the scalar curvature is  positive.

\end{proof}
 \vskip2mm
\noindent
{\bf Acknowledgement}.
This work was partly supported by MEXT Promotion of Distinctive Joint Research Center Program JPMXP0723833165 
and Osaka Metropolitan University Strategic Research Promotion Project (Development of International Research Hubs).


\begin{thebibliography}{99}
\bibitem{AL}
U. Abresch and J. Langer,
\emph{The normalized curve shortening flow and homothetic solutions},
J. Differential Geom. \textbf{23} (1986), 175--196.


\bibitem{Brendle}
S. Brendle,
\emph{Embedded self-similar shrinkers of genus $0$},
Ann. of Math. (2) \textbf{183} (2016), 715--728.

\bibitem{CaoLi}
H.-D. Cao and H. Li,
\emph{A gap theorem for self-shrinkers of the mean curvature flow in arbitrary codimension},
Calc. Var. Partial Differential Equations \textbf{46} (2013), 879--889.


\bibitem{CLW}
Q.-M. Cheng, F. Li, and G. Wei,
\emph{Estimates on scalar curvature of self-shrinkers},
arXiv:2605.18532v3, 
to appear in Journal f\"ur die reine und angewandte Mathematik.

\bibitem{CLWf4}
Q.-M. Cheng, Z. Li, and G. Wei,
\emph{Complete self-shrinkers with constant norm of the second fundamental form},
Math. Z. \textbf{300} (2022), 995--1018.

\bibitem{CLWf3}
Q.-M. Cheng, Z. Li, and G. Wei,
\emph{Classification of complete $3$-dimensional self-shrinkers in the Euclidean space $\R^4$},
Sci. China Math. \textbf{67} (2024), 873--882.

\bibitem{ChengOgata}
Q.-M. Cheng and S. Ogata,
\emph{$2$-dimensional complete self-shrinkers in $\R^3$},
Math. Z. \textbf{284} (2016), 537--542.

\bibitem{ChengPeng}
Q.-M. Cheng and Y. Peng,
\emph{Complete self-shrinkers of the mean curvature flow},
Calc. Var. Partial Differential Equations \textbf{52} (2015), 497--506.

\bibitem{ChengWei}
Q.-M. Cheng and G. Wei,
\emph{A gap theorem of self-shrinkers},
Trans. Amer. Math. Soc. \textbf{367} (2015), 4895--4915.

\bibitem{CWY}
Q.-M. Cheng, G. Wei, and W. Yano,
\emph{The second gap on complete self-shrinkers},
Proc. Amer. Math. Soc. \textbf{151} (2023), 339--348.

\bibitem{ChengZhou}
X. Cheng and D. Zhou,
\emph{Volume estimate about shrinkers},
Proc. Amer. Math. Soc. \textbf{141} (2013), 687--696.

\bibitem{CM}
T. H. Colding and W. P. Minicozzi II,
\emph{Generic mean curvature flow I: Generic singularities},
Ann. of Math. (2) \textbf{175} (2012), 755--833.

\bibitem{DXVolume}
Q. Ding and Y. L. Xin,
\emph{Volume growth, eigenvalue and compactness for self-shrinkers},
Asian J. Math. \textbf{17} (2013), 443--456.

\bibitem{DXRigidity}
Q. Ding and Y. L. Xin,
\emph{The rigidity theorems of self-shrinkers},
Trans. Amer. Math. Soc. \textbf{366} (2014), 5067--5085.

\bibitem{Drugan}
G. Drugan,
\emph{An immersed $S^2$ self-shrinker},
Trans. Amer. Math. Soc. \textbf{367} (2015), 3139--3159.

\bibitem{Guo}
Z. Guo,
\emph{Scalar curvature of self-shrinkers},
J. Math. Soc. Japan \textbf{70} (2018), 1103--1110.

\bibitem{Halldorsson}
H. P. Halldorsson,
\emph{Self-similar solutions to the curve shortening flow},
Trans. Amer. Math. Soc. \textbf{364} (2012), 5285--5309.


\bibitem{Huisken90}
G. Huisken,
\emph{Asymptotic behavior for singularities of the mean curvature flow},
J. Differential Geom. \textbf{31} (1990), 285--299.

\bibitem{Huisken93}
G. Huisken,
\emph{Local and global behaviour of hypersurfaces moving by mean curvature},
in \emph{Differential Geometry: Partial Differential Equations on Manifolds}
(Los Angeles, CA, 1990), Proc. Sympos. Pure Math. \textbf{54}, Part 1,
Amer. Math. Soc., Providence, RI, 1993, 175--191.

\bibitem{LWVolume}
H. Li and Y. Wei,
\emph{Lower volume growth estimates for self-shrinkers of mean curvature flow},
Proc. Amer. Math. Soc. \textbf{142} (2014), 3237--3248.

\bibitem{LWRigidity}
H. Li and Y. Wei,
\emph{Classification and rigidity of self-shrinkers in the mean curvature flow},
J. Math. Soc. Japan \textbf{66} (2014), 709--734.

\bibitem{LSY}
Y. Luo, L. Sun, and J. Yin,
\emph{Complete self-similar hypersurfaces to the mean curvature flow with nonnegative constant scalar curvature},
Front. Math. \textbf{18} (2023), 417--430.

\bibitem{WeiWylie}
G. Wei and W. Wylie,
\emph{Comparison geometry for the Bakry--\'Emery Ricci tensor},
J. Differential Geom. \textbf{83} (2009), 377--405.

\bibitem{xx17}
H. W. Xu and Z. Y. Xu, On Chern’s conjecture for minimal hypersurfaces and rigidity of
self-shrinkers, J. Funct. Anal., 273(2017), 3406-3425.

\bibitem{lxx20}
L. Lei, H. W. Xu and Z. Y. Xu,  A new pinching theorem for complete self-shrinkers and its generalization,
 Sci. China Math. 63 (2020), 113-1152. 
 
\end{thebibliography}
\end{document}